\newtheorem{theorem}{Theorem}
\newtheorem{example}{Example}
\newtheorem{lemma}[theorem]{Lemma}
\newenvironment{proof}{\noindent{\bf Proof.}}{\hspace*{2mm}~$\square$}
\newenvironment{proofof}[1]{\noindent{\bf Proof of #1.}}{\hspace*{2mm}~$\square$}
\newcommand{\N}{\mathbb{N}}
\newcommand{\R}{\mathbb{R}}
\newcommand{\Z}{\mathbb{Z}}
\newcommand{\A}{\mathscr{A}}
\newcommand{\C}{\mathscr{C}}
\newcommand{\E}{\mathscr{E}}
\newcommand{\G}{\mathscr{G}}
\newcommand{\V}{\mathscr{V}}
\newcommand{\n}{\hspace*{-6pt}}
\newcommand{\ep}{\epsilon}
\newcommand{\ind}{\mathbf 1}
\newcommand{\norm}[1]{|\!|#1|\!|}
\renewcommand{\d}{\mathbf{d}}
\renewcommand{\c}{\mathbf{c}}
\DeclareMathOperator{\card}{card}
\DeclareMathOperator{\uniform}{Uniform \,}
\begin{document}

\begin{frontmatter}
\title     {Probability of consensus in the multivariate Deffuant \\ model on finite connected graphs}
\runtitle  {Multivariate Deffuant model on finite connected graphs}
\author    {Nicolas Lanchier and Hsin-Lun Li}
\runauthor {Nicolas Lanchier and Hsin-Lun Li}
\address   {School of Mathematical and Statistical Sciences \\ Arizona State University, Tempe, AZ 85287, USA. \\ nicolas.lanchier@asu.edu \\ hsinlunl@asu.edu}

\maketitle

\begin{abstract} \ \
 The Deffuant model is a spatial stochastic model for the dynamics of opinions in which individuals are located on a connected graph representing
 a social network and characterized by a number in the unit interval representing their opinion.
 The system evolves according to the following averaging procedure:
 pairs of neighbors interact independently at rate one if and only if the distance between their opinions does not exceed a certain confidence threshold,
 with each interaction resulting in the neighbors' opinions getting closer to each other.
 All the mathematical results collected so far about this model assume that the individuals are located on the integers.
 In contrast, we study the more realistic case where the social network can be any finite connected graph.
 In addition, we extend the opinion space to any bounded convex subset of a normed vector space where the norm is used to measure the level of
 disagreement or distance between the opinions.
 Our main result gives a lower bound for the probability of consensus.
 Interestingly, our proof leads to a universal lower bound that depends on the confidence threshold, the opinion space~(convex subset and norm) and the
 initial distribution, but not on the size or the topology of the social network.
\end{abstract}

\begin{keyword}[class=AMS]
\kwd[Primary ]{60K35}
\end{keyword}

\begin{keyword}
\kwd{Interacting particle systems, Deffuant model, opinion dynamics, martingale, optional stopping theorem, confidence threshold, consensus.}
\end{keyword}

\end{frontmatter}


\section{Introduction}
\label{sec:intro}
 This paper is concerned with opinion dynamics on connected graphs.
 The first and most popular stochastic model in this topic is the voter model, introduced independently in~\cite{clifford_sudbury_1973, holley_liggett_1975}.
 The main mechanism in the voter model is social influence, the tendency of individuals to become more similar when they interact.
 More precisely, individuals located on the vertex set of a connected graph~(traditionally the~$d$-dimensional integer lattice) are characterized
 by one of two competing opinions, and update their opinion at rate one by simply mimicking one of their neighbors chosen uniformly at random.
 Using a duality relationship between the voter model and a system of coalescing random walks, it can be proved that the process on the infinite square
 lattice clusters in one and two dimensions whereas opinions coexist at equilibrium in higher dimensions~\cite{holley_liggett_1975}.
 While mathematicians studied analytically various aspects of the model such as the asymptotics for the cluster size in one and two
 dimensions~\cite{bramson_griffeath_1980, cox_griffeath_1986}, the spatial correlations at equilibrium in higher dimensions~\cite{bramson_griffeath_1979},
 and the occupation time of the process~\cite{cox_griffeath_1983}, social scientists and statistical physicists developed and
 studied numerically more realistic models of opinion dynamics.
 We refer to~\cite{lanchier_2017, liggett_1999} for reviews of the main results about the voter model, and to~\cite{castellano_fortunato_loreto_2009}
 for a review of more recent stochastic models of opinion dynamics introduced by applied scientists. \\
\indent Apart from social influence, an important component of opinion dynamics is homophily, the tendency to interact more frequently
 with individuals who are more similar.
 The most popular spatial model that includes social influence and homophily is probably the Axelrod model~\cite{axelrod_1997} where individuals are
 now characterized by a vector of cultural features, and interact with their neighbors at a rate proportional to the number of features they
 share~(homophily), which results in the two neighbors having one more feature in common (social influence).
 For a mathematical treatment of the Axelrod model, we refer
 to~\cite{lanchier_2012a, lanchier_moisson_2016, lanchier_scarlatos_2013, lanchier_schweinsberg_2012, li_2014}.
 Other spatial stochastic models of opinion dynamics include homophily in the form of a confidence threshold:
 individuals interact with their neighbors on the graph if and only if the level of disagreement between the two individuals before the interaction
 does not exceed a certain threshold.
 The simplest such model is the constrained voter model~\cite{vazquez_krapivsky_redner_2003}, the voter model with three opinions~(leftist,
 centrist and rightist) where leftists and rightists do not interact.
 Extensions of this model where the opinion space takes the form of a finite connected graph and the level of disagreement is measured using the
 geodesic distance on this graph were introduced and studied analytically in~\cite{lanchier_scarlatos_2017, scarlatos_2013}.
 The Deffuant model~\cite{deffuant_al_2000} and the Hegselmann-Krause model~\cite{hegselmann_krause_2002} are two other important spatial stochastic
 models that include social influence and homophily in the form of a confidence threshold. \\
\indent In the original version of the Deffuant model~\cite{deffuant_al_2000}, individuals are located on a general finite connected graph representing
 a social network and characterized by opinions that are initially chosen independently and uniformly at random in the unit interval.
 Pairs of neighbors interact at rate one if and only if the distance between their opinions before the interaction does not exceed a confidence
 threshold~$\tau$~(homophily), which results in the two neighbors' opinions getting closer to each other after the interaction~(social influence).
 Because~\cite{deffuant_al_2000} is purely based on numerical simulations, the authors only considered specific social networks:
 the complete graph and the two-dimensional torus.
 Their simulations on large graphs suggest the following conjecture for the infinite system:
 the process exhibits a phase transition at the critical threshold one-half in that a consensus is reached
 when~$\tau > 1/2$ whereas disagreements persist in the long long when~$\tau < 1/2$.
 This conjecture was first established in~\cite{lanchier_2012b} for the process on the integers using a combination of probabilistic and
 geometric techniques while a slightly stronger result was proved shortly after in~\cite{haggstrom_2012} using a different approach for part
 of the proof.
 The existence of a phase transition along with lower and upper bounds for the critical threshold were also proved for two variants of the model:
 a multivariate version where the opinion space is a finite-dimensional vector space and certain metrics are used to quantify the disagreement
 between individuals~\cite{hirscher_2014a, hirscher_2014b}, and a discrete version called the vectorial~Deffuant model also introduced
 in~\cite{deffuant_al_2000} where the opinion space is the hypercube and the disagreement between individuals is quantified using the
 Hamming distance~\cite{lanchier_scarlatos_2014}. \\
\indent In this paper, we study a version of the model where both the opinion space and the social network are fairly general.
 The opinion space is a bounded convex subset of a finite-dimensional normed vector space~(where the norm is used to measure the disagreements).
 Convexity is in fact a necessary assumption following from the model's evolution rules because future opinions must be on the segment connecting
 past opinions.
 More importantly, while~\cite{haggstrom_2012, hirscher_2014a, hirscher_2014b, lanchier_2012b, lanchier_scarlatos_2014} assume that the individuals are
 located on the integers, we follow~\cite{deffuant_al_2000} by assuming more realistically that the individuals are located on the vertex set of a general
 finite connected graph, meaning any possible real-world social networks.
 But unlike~\cite{deffuant_al_2000} that relies on numerical simulations and therefore can only look at a few specific graphs, our results apply
 to all possible finite connected graphs.
 Due to the finiteness of the graph, the existence of a phase transition at a specific critical threshold no longer holds, and we instead
 derive a general lower bound for the probability of consensus.
 Interestingly, while our bound depends on the choice of the opinion space~(convex subset and norm), it is uniform in all possible choices of the
 social network.


\section{Model description and main results}
\label{sec:results}
 The two key components of the model studied in this paper are the social network on which the individuals are located and the opinion space.
 To define these two components,
\begin{itemize}
 \item we let~$\G = (\V, \E)$ be a finite connected graph and \vspace*{2pt}
 \item we let~$\Delta \subset \R^d$ be a bounded convex subset and~$\norm{\cdot}$ be a norm on~$\R^d$.
\end{itemize}
 The multivariate Deffuant model is a continuous-time Markov chain whose state at time~$t$ is a configuration of opinions on the graph:
 $$ \xi_t : \V \to \Delta \quad \hbox{where} \quad \xi_t (x) = \hbox{opinion at vertex~$x$ at time~$t$}. $$
 Following all the previous works in this topic, we assume that the process starts from a constant product measure, meaning that the initial
 opinions~$\xi_0 (x)$, $x \in \V$, are independent and identically distributed, and we let~$X$ be the random variable with distribution
 $$ P (X \in B) = P (\xi_0 (x) \in B) \quad \hbox{for all~$x \in \V$ and all Borel subsets~$B \subset \Delta$.} $$
 The evolution rules are based on two parameters: the confidence threshold~$\tau > 0$ and the convergence parameter~$\mu \in (0, 1/2]$.
 The edges become independently active at rate one, which results in a potential update of the system at the two vertices connected
 by the active edge.
 More precisely, assuming that edge~$(x, y) \in \E$ is active at time~$t$, we let
 $$ \begin{array}{rcl}
    \xi_t (x) & \n = \n & \xi_{t-} (x) + \mu \,(\xi_{t-} (y) - \xi_{t-} (x)) \ \ind \{\norm{\xi_{t-} (x) - \xi_{t-} (y)} \leq \tau \} \vspace*{4pt} \\
    \xi_t (y) & \n = \n & \xi_{t-} (y) + \mu \,(\xi_{t-} (x) - \xi_{t-} (y)) \ \ind \{\norm{\xi_{t-} (x) - \xi_{t-} (y)} \leq \tau \} \end{array} $$
 while the opinions at the other vertices remain unchanged.
 In words, neighbors interact at rate one if and only if their opinion distance or level of disagreement before the interaction does not exceed
 the confidence threshold~$\tau$, which results in a partial averaging of their opinions by a factor~$\mu$.
 Note that the model is well-defined because the probability that different edges become active simultaneously is equal to zero. \\
\indent Our main result gives a lower bound for the probability of consensus that applies to any finite connected graph, any opinion space~(convex set
 and norm), and any initial distribution with value in the opinion space.
 To state this result, we let
 $$ \begin{array}{l} \d = \sup_{a, b \,\in \Delta} \norm{a - b} \quad \hbox{and} \quad
                     \c \in \Delta \ \ \hbox{such that} \ \ \sup_{a \in \Delta} \norm{a - \c} = \d / 2 \end{array} $$
 be the diameter and the center of the convex set~$\Delta$, respectively.
\begin{theorem}[probability of consensus] --
\label{th:consensus}
 For all~$\tau > \d / 2$,
 $$ P (\C) \geq 1 - \frac{E \,\norm{X - \c}}{\tau - \d / 2} \quad \hbox{where} \quad
       \C = \Bigg\{\lim_{t \to \infty} \,\sup_{x, y \in \V} \,\norm{\xi_t (x) - \xi_t (y)} = 0 \Bigg\}. $$
\end{theorem}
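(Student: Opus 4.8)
The plan is to exhibit a bounded, pathwise non-increasing functional $M_t$ of the configuration whose limit, on the complement $\C^c$ of the consensus event, is forced to be at least $|\V|(\tau-\d/2)$; the theorem will then follow from Markov's inequality applied to $\lim_t M_t$ together with the obvious computation of $E\,M_0$. The natural choice is
$$ M_t \;=\; \sum_{x\in\V}\norm{\xi_t(x)-\c}. $$
Each of $M_t$ and $\sup_{x,y\in\V}\norm{\xi_t(x)-\xi_t(y)}$ is non-increasing along every trajectory: when an active edge $(x,y)$ produces an interaction, $\xi_t(x)-\c$ and $\xi_t(y)-\c$ are convex combinations (weights $\mu$ and $1-\mu$) of $\xi_{t-}(x)-\c$ and $\xi_{t-}(y)-\c$, so by the triangle inequality $\norm{\xi_t(x)-\c}+\norm{\xi_t(y)-\c}\le\norm{\xi_{t-}(x)-\c}+\norm{\xi_{t-}(y)-\c}$, and similarly for pairwise distances, while absent or ineligible interactions change nothing. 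In particular the limit in the definition of $\C$ exists, and since $0\le M_t\le|\V|\,\d/2$ with $\xi_0(x)$, $x\in\V$, i.i.d.\ copies of $X$, the limit $M_\infty:=\lim_t M_t$ satisfies $E\,M_\infty\le E\,M_0=|\V|\,E\norm{X-\c}$.

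The structural heart of the argument, which I would package as a separate lemma, is the claim that almost surely on $\C^c$ the configuration converges, $\xi_t\to\xi_\infty$, to a limit that is non-constant and \emph{frozen}: for every edge $(x,y)$ either $\xi_\infty(x)=\xi_\infty(y)$ or $\norm{\xi_\infty(x)-\xi_\infty(y)}\ge\tau$. Granting convergence, non-constancy is immediate from the definition of $\C$, and the frozen property follows because an edge $(x,y)$ with $\norm{\xi_\infty(x)-\xi_\infty(y)}<\tau$ is eligible at all large times, hence produces effective interactions at a sequence of times $t_k\uparrow\infty$, and letting $k\to\infty$ in the resulting identity $\xi_{t_k}(x)-\xi_{t_k}(y)=(1-2\mu)(\xi_{t_k-}(x)-\xi_{t_k-}(y))$ forces $\xi_\infty(x)=\xi_\infty(y)$ since $\mu>0$. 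I expect the almost sure convergence of $\xi_t$ to be the main obstacle. The process is pure-jump in the compact set $\Delta^\V$, and the natural tool is the energy $Q_t=\sum_x|\xi_t(x)-A|_2^2$, where $|\cdot|_2$ is the Euclidean norm and $A=|\V|^{-1}\sum_x\xi_0(x)$ is conserved: $Q_t$ decreases by $2\mu(1-\mu)|\xi_{t-}(x)-\xi_{t-}(y)|_2^2$ at each effective interaction, so effective interactions eventually involve nearly equal opinions, and this together with the facts that each edge is used infinitely often and that $\G$ is connected should yield convergence and the frozen description (on the event that every edge is eventually eligible this is the classical convergence of averaging dynamics on a connected graph, which I would reprove or cite).

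The final ingredient is an elementary rigidity property of frozen configurations: if $\zeta:\V\to\Delta$ is frozen and non-constant, then $\norm{\zeta(x)-\c}\ge\tau-\d/2$ for every $x\in\V$. Indeed, suppose $\norm{\zeta(x_0)-\c}<\tau-\d/2$ for some $x_0$ and put $S=\{x\in\V:\zeta(x)=\zeta(x_0)\}$; if $S\ne\V$, connectedness yields an edge $(x,y)$ with $x\in S$, $y\notin S$, so $\zeta(x)\ne\zeta(y)$ and hence $\norm{\zeta(x)-\zeta(y)}\ge\tau$ by the frozen property, whereas $\norm{\zeta(x)-\zeta(y)}=\norm{\zeta(x_0)-\zeta(y)}\le\norm{\zeta(x_0)-\c}+\norm{\c-\zeta(y)}<(\tau-\d/2)+\d/2=\tau$ because $\norm{\c-\zeta(y)}\le\d/2$ by the definition of $\c$ — a contradiction; so $S=\V$, contradicting non-constancy. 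Combining everything: on $\C^c$ the limit $\xi_\infty$ is frozen and non-constant, so $M_\infty=\sum_x\norm{\xi_\infty(x)-\c}\ge|\V|(\tau-\d/2)$, and therefore, using $\tau>\d/2$,
$$ P(\C^c)\;\le\;P\big(M_\infty\ge|\V|(\tau-\d/2)\big)\;\le\;\frac{E\,M_\infty}{|\V|(\tau-\d/2)}\;\le\;\frac{|\V|\,E\norm{X-\c}}{|\V|(\tau-\d/2)}\;=\;\frac{E\norm{X-\c}}{\tau-\d/2}. $$
The cancellation of $|\V|$ is precisely why the bound is independent of the social network.
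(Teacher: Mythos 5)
Your overall architecture is sound and genuinely different from the paper's in several pleasant ways: you replace the stopping time $T_*$ and the optional stopping theorem by a direct application of Markov's inequality to $M_\infty=\lim_t M_t$ (legitimate, since $M_t$ is pathwise nonincreasing and bounded, so $E\,M_\infty\le E\,M_0$ trivially); your rigidity lemma for frozen non-constant configurations is correct and plays the role of the paper's event $\A$ and Lemma~\ref{lem:consensus}; and the conserved-average/Euclidean-energy identity $Q_t-Q_{t-}=-2\mu(1-\mu)|\xi_{t-}(x)-\xi_{t-}(y)|_2^2$ is a cleaner route to ``jumps become small'' than the paper's Lemma~\ref{lem:jump}, which needs the second triangle inequality of Lemma~\ref{lem:triangle} and a finite $\ep'$-grid of $\Delta$.

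However, there is a genuine gap exactly where you anticipate one: the almost sure convergence $\xi_t\to\xi_\infty$ is asserted but not proved, and the energy argument does not deliver it by itself. What $Q_t\downarrow Q_\infty$ gives is that the sum of the \emph{squares} of the jump sizes is finite, hence the jumps tend to $0$; but a finite sum of squares does not imply a convergent series of displacements, so a priori each $\xi_t(x)$ could wander forever with ever smaller steps (there is no martingale structure for individual opinions to rule this out, since the drift of $\xi_t(x)$ depends on its neighbors). Your parenthetical appeal to ``classical convergence of averaging dynamics on a connected graph'' covers only the event that every edge remains eligible, which is essentially the consensus regime; the case you actually need is $\C^c$, where some edges become permanently blocked and the graph effectively fragments. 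The missing idea is the paper's trapping argument (Lemmas~\ref{lem:distance} and~\ref{lem:limit}): after an almost surely finite time $T$, every edge has opinion distance either below $\ep/N$ or above $\tau$ (this step also needs an argument that an edge whose distance returns to $[\ep,\tau]$ infinitely often accumulates infinite interaction time and hence produces infinitely many jumps of size at least $\mu\ep$); one then partitions $\V$ into classes along edges of the first type, observes that each class sits in a ball of radius $\ep$ by the triangle inequality, and checks inductively that convexity of balls keeps each class trapped in its ball for all later times while cross-class edges never fire. Only this confinement for every $\ep>0$ yields (P1), and with it your frozen description. Once that lemma is supplied (or the paper's is invoked), the rest of your proof goes through and gives the stated bound.
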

 The key to proving the theorem is to study a collection of auxiliary processes~(see~\eqref{eq:supermartingale} below) that keep track of the
 cumulative disagreement between a fixed opinion~$c \in \Delta$ and the opinions at each of the vertices at time~$t$.
 Using a triangle-type inequality~(Lemma~\ref{lem:triangle}), we first prove that all these auxiliary processes are almost surely nonincreasing,
 meaning that, for all~$c$, the averaging procedure can only decrease the overall level of disagreement between an observer with fixed opinion~$c$
 and the population~(Lemma~\ref{lem:monotone}).
 Almost sure monotonicity implies two important results:
\begin{enumerate}
 \item   The opinion model converges almost surely to a (random) limiting configuration. \vspace*{2pt}
 \item[] In addition, due to the evolution rules, each limiting configuration is characterized by a partition of the graph into connected components
         such that all the individuals in the same component share the same opinion and the distance between opinions in two adjacent components
         exceeds the confidence threshold~$\tau$~(Lemma~\ref{lem:limit}). \vspace*{4pt}
 \item   All the auxiliary processes are bounded supermartingales. \vspace*{2pt}
 \item[] In particular, one may apply the optional stopping theorem to these supermartingales and a certain stopping time~(Lemma~\ref{lem:stopping})
         to obtain a lower bound for the probability that the random partition above consists of only one set, meaning that all the individuals
         in the limiting configuration share the same opinion and consensus occurs.
\end{enumerate}
 Interestingly, our proof leads to a lower bound that depends on the confidence threshold, the opinion space~(convex set and norm) and the initial
 distribution, but not on the size or the topology of the social network.
 The probability of consensus, however, depends on the choice of the network so our lower bound is a universal bound that is uniform over
 all possible choices of the network. \\
\indent To illustrate our result, we now give two numerical examples where the lower bound in the theorem can be computed explicitly.
 In both examples, we let~$\norm{\cdot}$ be any norm on the vector space~$\R^d$ and assume that the set of opinions is the ball
 $$ \Delta = B (c, r) = \{a \in \R^d : \norm{a - c} < r \} \quad \hbox{where} \quad c \in \R^d \quad \hbox{and} \quad r > 0. $$
 In particular, the norm used to define the set of opinions is the same as the norm used to measure the distance between the opinions.
 In our first example, we assume that the opinions are initially uniformly distributed over the opinion set~($X = \uniform (\Delta)$), while in
 our second example, we assume that the initial distribution is of the form
\begin{equation}
\label{eq:triangle}
  P (\xi_0 (x) \in B) = \int_B (r - \norm{a - c}) \,d\lambda (a) \bigg/ \int_{\Delta} (r - \norm{a - c}) \,d\lambda (a)
\end{equation}
 for all vertices~$x \in \V$ and all Borel sets~$B \subset \Delta$.
 In both examples,~$c$ can be viewed as the centrist opinion.
 The initial opinions are closer to this centrist opinion in the second example than in the first example.
 Using the theorem, we get the following explicit lower bounds.
\begin{example} --
\label{ex:uniform}
 Assume~$X = \uniform (\Delta)$. Then,~$P (\C) \geq 1 - dr / (d + 1)(\tau - r)$.
\end{example}
\begin{example} --
\label{ex:triangle}
 Assume~\eqref{eq:triangle}. Then,~$P (\C) \geq 1 - dr / (d + 2)(\tau - r)$.
\end{example}
 The rest of the paper is devoted to proofs.
 In the next section, we show that the opinion model converges almost surely to a (random) limiting configuration in which neighbors either share
 the same opinion or disagree too much to interact.
 Then, we use the optional stopping theorem for supermartingales to derive the universal lower bound for the probability of consensus.
 Finally, we compute the lower bound explicitly for our two examples.
 

\section{Limiting configurations}
\label{sec:limit}
 The objective of this section is to prove that, regardless of the initial configuration, the process converges almost surely to a limiting
 configuration in which any two neighbors either share the same opinion or disagree too much to interact, i.e.,
 $$ \begin{array}{rl}
    \hbox{(P1)} & \lim_{t \to \infty} \xi_t (x) = \xi_{\infty} (x) \ \hbox{exists for all} \ x \in \V \vspace*{4pt} \\
    \hbox{(P2)} & \norm{\xi_{\infty} (x) - \xi_{\infty} (y)} \notin (0, \tau] \ \hbox{for all edges} \ (x, y) \in \E. \end{array} $$
 From now on, we let~$(X_t^c)$ be the process defined by
\begin{equation}
\label{eq:supermartingale}
  X_t^c = \sum_{x \in \V} \,\norm{\xi_t (x) - c} \quad \hbox{for all} \quad c \in \R^d.
\end{equation}
 That is, the process keeps track of the cumulative disagreement between a fixed opinion~$c$ possibly outside~$\Delta$ and the opinions at each of the vertices.
 To shorten the notation, we also let
 $$ \phi : \Delta \times \Delta \to \Delta \quad \hbox{defined as} \quad \phi (a, b) = (1 - \mu) a + \mu b = a + \mu (b - a). $$
 In particular, whenever a vertex~$x$ that has opinion~$a$ interacts with a vertex~$y$ that has a compatible opinion~$b \in B (a, \tau)$, meaning that
 the distance between the two opinions does not exceed~$\tau$, the opinion at~$x$
 becomes~$\phi (a, b)$ and the opinion at~$y$ becomes~$\phi (b, a)$.
 Although the details are somewhat more complicated, the basic idea to prove the two properties above is to show that the processes~$(X_t^c)$
 converge almost surely.
 To begin with, we prove the following lemma which is illustrated in Figure~\ref{fig:triangle} and gives two variations of
 the triangle inequality.
\begin{lemma}[triangle inequalities] --
\label{lem:triangle}
 For all~$a, b \in \Delta$ and all~$c \in \R^d$,
 $$ \begin{array}{rcl}
    \norm{\phi (a, b) - c} + \norm{\phi (b, a) - c} & \n \leq \n & \norm{a - c} + \norm{b - c} \vspace*{4pt} \\
    \norm{\phi (a, b) - c} + \norm{\phi (b, a) - c} & \n \leq \n & \norm{a - c} + \norm{b - c} - 2 \norm{\phi (a, b) - a} + \norm{a + b - 2c}. \end{array} $$
\end{lemma}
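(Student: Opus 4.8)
The plan is to derive both inequalities from the ordinary triangle inequality by writing the vectors $\phi(a,b)-c$ and $\phi(b,a)-c$ as convex combinations of simpler vectors; the point is that the \emph{same} quantity admits two different convenient representations, one tailored to each inequality. Throughout I would use that $\mu \in (0,1/2]$, so that all the coefficients appearing below are nonnegative, together with the elementary identities $\phi(a,b)-a = \mu(b-a)$ and $\phi(b,a)-b = \mu(a-b)$.

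For the first inequality I would start from $\phi(a,b)-c = (1-\mu)(a-c) + \mu(b-c)$ and its twin $\phi(b,a)-c = \mu(a-c) + (1-\mu)(b-c)$. Positive homogeneity and the triangle inequality give $\norm{\phi(a,b)-c} \le (1-\mu)\norm{a-c} + \mu\norm{b-c}$ and the analogous bound for $\phi(b,a)-c$; adding the two, the $\mu$-terms cancel and one is left with exactly $\norm{a-c}+\norm{b-c}$.

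For the second inequality the key idea is to use $\mu \le 1/2$ more forcefully by rewriting $\phi(a,b)$ as a convex combination of $a$ and the midpoint $m = (a+b)/2$: since $\phi(a,b) = a + \mu(b-a) = (1-2\mu)\,a + 2\mu\,m$, one gets $\phi(a,b)-c = (1-2\mu)(a-c) + 2\mu(m-c)$ with nonnegative coefficients, hence $\norm{\phi(a,b)-c} \le (1-2\mu)\norm{a-c} + 2\mu\norm{m-c}$. Since $\norm{m-c} = \tfrac12\norm{a+b-2c}$, and doing the same for $\phi(b,a)-c$, summing yields
$$ \norm{\phi(a,b)-c} + \norm{\phi(b,a)-c} \ \le\ (1-2\mu)\big(\norm{a-c}+\norm{b-c}\big) + 2\mu\,\norm{a+b-2c}. $$

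It then remains to check that this right-hand side is at most $\norm{a-c}+\norm{b-c} - 2\norm{\phi(a,b)-a} + \norm{a+b-2c}$. Using $\norm{\phi(a,b)-a} = \mu\norm{b-a}$ and rearranging, the required estimate collapses to $2\mu\norm{b-a} \le 2\mu(\norm{a-c}+\norm{b-c}) + (1-2\mu)\norm{a+b-2c}$, which is immediate from the triangle inequality $\norm{b-a} = \norm{(b-c)-(a-c)} \le \norm{a-c}+\norm{b-c}$ and the fact that $1-2\mu \ge 0$. I do not anticipate a genuine obstacle here: the only real idea is spotting the midpoint representation $\phi(a,b) = (1-2\mu)a + 2\mu m$ (which is precisely where $\mu \le 1/2$ enters), and everything else is routine bookkeeping with the triangle inequality; drawing the points $a$, $b$, $c$, $\phi(a,b)$, $\phi(b,a)$ and the midpoint $m$ makes both estimates transparent.
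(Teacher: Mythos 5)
Your proof is correct, and the first inequality is established exactly as in the paper (split $\phi(a,b)-c$ and $\phi(b,a)-c$ as convex combinations of $a-c$ and $b-c$, apply the triangle inequality and homogeneity, and sum so the $\mu$-terms cancel). For the second inequality your route is a genuine variant of the paper's. The paper inserts the midpoint $c_0=(a+b)/2$ as an intermediate point, bounding the left-hand side by $\norm{\phi(a,b)-c_0}+\norm{\phi(b,a)-c_0}+2\norm{c_0-c}$, and then uses the fact that $a$, $\phi(a,b)$, $c_0$, $\phi(b,a)$, $b$ are collinear in this order (this is where $\mu\le 1/2$ enters for them) to rewrite the first two terms as $\norm{a-b}-\norm{\phi(a,b)-a}-\norm{\phi(b,a)-b}$, finishing with $\norm{a-b}\le\norm{a-c}+\norm{b-c}$. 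You instead exploit $\mu\le 1/2$ through the convex-combination representation $\phi(a,b)=(1-2\mu)a+2\mu c_0$, obtaining the intermediate bound $(1-2\mu)(\norm{a-c}+\norm{b-c})+2\mu\norm{a+b-2c}$, and the residual inequality you reduce to, namely $2\mu\norm{b-a}\le 2\mu(\norm{a-c}+\norm{b-c})+(1-2\mu)\norm{a+b-2c}$, does follow immediately from the triangle inequality and $1-2\mu\ge 0$. The two arguments are close cousins, both hinging on the midpoint and on $\mu\le 1/2$, but yours replaces the paper's geometric observation that norms add along a segment of collinear points with pure homogeneity and convexity, which is slightly more mechanical to verify; the paper's version has the advantage of matching the picture in its figure (the median counted twice) more directly.
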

\begin{proof}
\begin{figure}[t!]
\centering
\scalebox{0.75}{\input{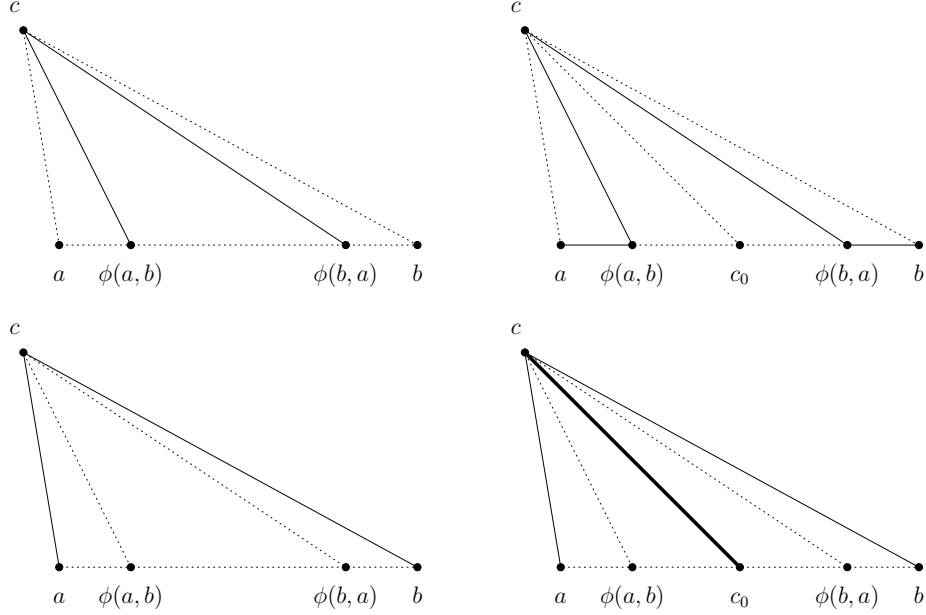}}
\caption{\upshape{Illustration of Lemma~\ref{lem:triangle}.
                  The lemma simply states that the sum of the norms of the vectors in solid lines is larger for the pictures at the bottom
                  than for the pictures at the top, where the median in thick line in the bottom right picture is counted twice.}}
\label{fig:triangle}
\end{figure}
 Using the triangle inequality and absolute homogeneity, we get
 $$ \begin{array}{l}
    \norm{\phi (a, b) - c} + \norm{\phi (b, a) - c} =
    \norm{(1 - \mu) a + \mu b - c} + \norm{(1 - \mu) b + \mu a - c} \vspace*{4pt} \\ \hspace*{80pt} =
      \norm{(1 - \mu)(a - c) + \mu (b - c)} + \norm{(1 - \mu)(b - c) + \mu (a - c)} \vspace*{4pt} \\ \hspace*{80pt} \leq
      \norm{(1 - \mu)(a - c)} + \norm{\mu (b - c)} + \norm{(1 - \mu)(b - c)} + \norm{\mu (a - c)} \vspace*{4pt} \\ \hspace*{80pt} =
      \norm{a - c} + \norm{b - c} \end{array} $$
 which proves the first inequality.
 Now, because~$0 < \mu \leq 1/2$,
 $$ a, \quad \phi (a, b), \quad c_0 = (a + b) / 2, \quad \phi (b, a), \quad b $$
 are aligned in this order, so using again the triangle inequality, we get
 $$ \begin{array}{rcl}
      \norm{\phi (a, b) - c} + \norm{\phi (b, a) - c} & \n \leq \n &
      \norm{\phi (a, b) - c_0} + \norm{c_0 - c} + \norm{\phi (b, a) - c_0} + \norm{c_0 - c} \vspace*{4pt} \\ & \n = \n &
      \norm{\phi (a, b) - \phi (b, a)} + 2 \norm{c_0 - c} \vspace*{4pt} \\ & \n = \n &
      \norm{a - b} - \norm{\phi (a, b) - a} - \norm{\phi (b, a) - b} + 2 \norm{c_0 - c} \vspace*{4pt} \\ & \n \leq \n &
      \norm{a - c} + \norm{b - c} - 2 \norm{\phi (a, b) - a} + \norm{a + b - 2c} \end{array} $$
 which proves the second inequality.
 This completes the proof.
\end{proof} \\ \\
 In the next lemma, we use the first inequality in~Lemma~\ref{lem:triangle} to prove that, for all~$c \in \Delta$, the processes~$(X_t^c)$ are
 almost surely nonincreasing.
\begin{lemma}[monotonicity] --
\label{lem:monotone}
 For all~$c \in \Delta$,
 $$ 0 \leq X_t^c \leq X_s^c \leq \d \cdot \card (\V) \quad \hbox{for all} \quad s \leq t. $$
\end{lemma}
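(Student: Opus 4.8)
The plan is to exploit the piecewise-constant-in-time structure of the process together with the first inequality of Lemma~\ref{lem:triangle}. Since each of the finitely many edges becomes active at rate one, the model is a continuous-time Markov chain with bounded jump rates, so almost surely there are only finitely many update times in any bounded interval, and between consecutive update times the configuration~$\xi_t$, and hence~$X_t^c$, is constant. It therefore suffices to show that~$X_t^c$ does not increase across a single update, and then chain these inequalities together with the constancy on the intervening intervals to get~$X_t^c \leq X_s^c$ for all~$s \leq t$.

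To handle a single update, I would fix a time~$t$ at which edge~$(x, y) \in \E$ is active and set~$a = \xi_{t-} (x)$ and~$b = \xi_{t-} (y)$. If~$\norm{a - b} > \tau$ the configuration is unchanged and there is nothing to prove. Otherwise~$\xi_t (x) = \phi (a, b)$, $\xi_t (y) = \phi (b, a)$, and the opinions at all other vertices are unchanged, so the increment telescopes to
$$ X_t^c - X_{t-}^c = \bigl( \norm{\phi (a, b) - c} + \norm{\phi (b, a) - c} \bigr) - \bigl( \norm{a - c} + \norm{b - c} \bigr) \leq 0 $$
by the first inequality in Lemma~\ref{lem:triangle}, which is valid for every~$c \in \R^d$ and in particular for~$c \in \Delta$. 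This is the only substantive step.

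For the flanking bounds, nonnegativity~$X_t^c \geq 0$ is immediate since~$X_t^c$ is a finite sum of norms. For the upper bound I would first note that~$\Delta$ is convex, so~$\phi (a, b) = (1 - \mu) a + \mu b \in \Delta$ whenever~$a, b \in \Delta$; since~$\xi_0 (x) \in \Delta$ for all~$x$, a straightforward induction over the update times gives~$\xi_s (x) \in \Delta$ for all~$s$ and all~$x \in \V$. Hence, for~$c \in \Delta$, each summand obeys~$\norm{\xi_s (x) - c} \leq \sup_{a, b \in \Delta} \norm{a - b} = \d$, and summing over the~$\card (\V)$ vertices yields~$X_s^c \leq \d \cdot \card (\V)$.

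I do not expect a genuine obstacle here: the heart of the argument is the one-line application of Lemma~\ref{lem:triangle}. The only point requiring a word of justification is the reduction in the first paragraph to individual updates, i.e. that the process is well-defined with almost surely finitely many jumps on compact time intervals, which is the standard fact for a continuous-time Markov chain built from independent rate-one edge clocks on a finite graph.
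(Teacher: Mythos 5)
Your proposal is correct and follows essentially the same route as the paper: the single substantive step is the application of the first inequality of Lemma~\ref{lem:triangle} at each update time, combined with the observation that opinions remain in~$\Delta$ so each summand is bounded by the diameter~$\d$. The extra details you supply (finitely many jumps on compact intervals, the trivial case~$\norm{a-b} > \tau$, the induction showing~$\xi_s(x) \in \Delta$) are points the paper leaves implicit, and your treatment of them is sound.
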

\begin{proof}
 At each update of the processes, say at time~$s$,
 $$ \xi_s (x) = \phi (\xi_{s-} (x), \xi_{s-} (y)) \quad \hbox{and} \quad \xi_s (y) = \phi (\xi_{s-} (y), \xi_{s-} (x)) \quad \hbox{for some} \quad (x, y) \in \E. $$
 In particular, applying Lemma~\ref{lem:triangle} with~$a = \xi_{s-} (x)$ and~$b = \xi_{s-} (y)$, we get
 $$ \begin{array}{rcl}
      X_s^c - X_{s-}^c & \n = \n & \norm{\xi_s (x) - c} + \norm{\xi_s (y) - c} - \norm{\xi_{s-} (x) - c} - \norm{\xi_{s-} (y) - c} \vspace*{4pt} \\
                       & \n = \n & \norm{\phi (a, b) - c} + \norm{\phi (b, a) - c} - \norm{a - c} - \norm{b - c} \leq 0. \end{array} $$
 In addition, because~$c \in \Delta$, we have
 $$ 0 \leq X_t^c = \sum_{x \in \V} \,\norm{\xi_t (x) - c} \leq \card (\V) \cdot \sup_{a \,\in \Delta} \norm{a - c} \leq \d \cdot \card (\V) < \infty. $$
 This completes the proof.
\end{proof} \\ \\
 Note that Lemma~\ref{lem:monotone} implies that the processes~$(X_t^c)$ are bounded supermartingales, which will be used later with the optional
 stopping theorem to derive our universal lower bound for the probability of consensus.
 By the martingale convergence theorem, each of these processes converges almost surely to a finite random variable, which
 suggests almost sure convergence of the interacting particle system.
 The main difficulty to prove this result is that whenever two vertices with compatible opinions~$a$ and~$b$ interact, the process~$(X_t^c)$
 does not ``see the update'' when~$a, b, c$ are aligned in this order.
 For some norms, the lack of alignment is not even a sufficient condition for the process to see the change of opinions so it is not
 clear how to deduce convergence of the system.
 To prove this result rigorously, we now use Lemma~\ref{lem:monotone} and the second inequality in~Lemma~\ref{lem:triangle} to show that the
 jumps at each vertex get smaller and smaller as time goes to infinity.
\begin{lemma} --
\label{lem:jump}
 For all~$\ep > 0$, there exists~$S = S (\ep)$ almost surely finite such that
 $$ \norm{\xi_s (x) - \xi_{s-} (x)} < \ep \quad \hbox{for all} \quad s \geq S \ \hbox{and} \ x \in \V. $$
\end{lemma}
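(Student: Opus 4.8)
The plan is to extract the size of the jump from the \emph{second} inequality in Lemma~\ref{lem:triangle}. Unlike the first inequality, it contains the term $-2\norm{\phi(a,b)-a}$, which records exactly the size of the jump $\mu\norm{a-b}$ produced at an interaction, but it pays for this with the extra term $\norm{a+b-2c} = 2\norm{c_0 - c}$, where $c_0 = (a+b)/2$ is the midpoint of the two interacting opinions. The idea is that this extra term is negligible provided we evaluate the auxiliary process $(X^c_t)$ at a point $c$ that is close to $c_0$; then a small decrease of $X^c$ forces a small jump. Since the midpoint varies with the interaction, I would prepare a finite family of observers covering all possible midpoints.

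First I would fix $\ep > 0$ and two small constants $\delta, \eta > 0$ chosen at the end so that $(\delta + 2\eta)/2 < \ep$ (e.g.\ $\delta = \eta = \ep/2$). Since $\Delta$ is bounded, its closure is compact, so there is a \emph{finite} set $C = \{c_1, \dots, c_N\} \subset \Delta$ that is an $\eta$-net of $\Delta$: every point of $\Delta$ is within distance $\eta$ of some $c_i$. By Lemma~\ref{lem:monotone}, for each $i$ the process $(X^{c_i}_t)$ is nonnegative and nonincreasing, hence converges almost surely to a finite limit $X^{c_i}_\infty$. As there are finitely many indices, there is an almost surely finite random time $S = S(\ep)$ such that $X^{c_i}_S - X^{c_i}_\infty < \delta$ for all $i$; by monotonicity, $X^{c_i}_u - X^{c_i}_v < \delta$ whenever $S \leq u \leq v$, for every $i$.

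Next I would check that $S$ works. Suppose an edge $(x,y)$ becomes active at a time $s > S$, and set $a = \xi_{s-}(x)$, $b = \xi_{s-}(y)$. If $\norm{a-b} > \tau$ there is no update and both jumps vanish, so assume the opinions are compatible. By convexity the midpoint $c_0 = (a+b)/2$ lies in $\Delta$, so we may choose $c_i \in C$ with $\norm{c_0 - c_i} < \eta$, whence $\norm{a + b - 2c_i} = 2\norm{c_0 - c_i} < 2\eta$. Applying the second inequality of Lemma~\ref{lem:triangle} with this $c_i$ and using, exactly as in the proof of Lemma~\ref{lem:monotone}, that $X^{c_i}_s - X^{c_i}_{s-} = \norm{\phi(a,b)-c_i} + \norm{\phi(b,a)-c_i} - \norm{a-c_i} - \norm{b-c_i}$, we obtain
$$ 2\norm{\phi(a,b) - a} \leq X^{c_i}_{s-} - X^{c_i}_s + \norm{a + b - 2c_i} < \delta + 2\eta. $$
Since the jump at each vertex has norm exactly $\norm{\phi(a,b)-a} = \norm{\phi(b,a)-b} = \mu\norm{a-b}$, both jumps at time $s$ are bounded by $(\delta+2\eta)/2 < \ep$, which is the claim (the boundary case $s = S$, immaterial, can be absorbed by enlarging $S$ slightly).

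The routine ingredients are the construction of a finite $\eta$-net of a bounded set and the observation that a maximum of finitely many almost surely finite times is almost surely finite. The one genuinely delicate point — and the reason the first triangle inequality alone does not suffice — is the ``invisible update'' phenomenon noted just before the statement: when $a, b, c$ are aligned, the decrease of $X^c$ carries no information about the jump. Covering $\Delta$ by a fixed finite net and always reading off the auxiliary process at a net point close to the current midpoint is precisely what removes this degeneracy, and it is the heart of the argument.
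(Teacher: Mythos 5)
Your proof is correct and uses essentially the same ingredients as the paper's: the second inequality of Lemma~\ref{lem:triangle} read off at a point of a finite net of~$\Delta$ chosen close to the midpoint~$(a+b)/2$, combined with the monotonicity and hence almost sure convergence of the finitely many auxiliary processes~$(X_t^{c_i})$. The only difference is organizational --- you argue directly (after the finitely many observers have nearly converged, no large jump can occur) whereas the paper argues by contradiction (infinitely many large jumps would force some fixed net point to see infinitely many decreases of size at least~$\ep$, contradicting positivity) --- which does not change the substance of the argument.
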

\begin{proof}
 Assume by contradiction that there exist~$\ep > 0$ and~$x \in \V$ such that the opinion at~$x$ jumps by more than~$\ep$ infinitely often,
 and let~$(s_i)$ be the times of these updates:
 $$ \norm{\xi_{s_i} (x) - \xi_{s_i-} (x)} \geq \ep \quad \hbox{for all} \quad i > 0. $$
 Letting~$y_i \in \V$ be the vertex that interacts with~$x$ at time~$s_i$, setting
 $$ a_i = \xi_{s_i-} (x), \quad b_i = \xi_{s_i-} (y) \quad \hbox{and} \quad c_i = (a_i + b_i) / 2, $$
 and applying the second inequality in Lemma~\ref{lem:triangle} with~$a = a_i$ and~$b = b_i$, we get
\begin{equation}
\label{eq:jump-1}
  \begin{array}{rcl}
    X_{s_i}^c - X_{s_i-}^c & \n = \n &
      \norm{\xi_{s_i} (x) - c} + \norm{\xi_{s_i} (y) - c} - \norm{\xi_{s_i-} (x) - c} - \norm{\xi_{s_i-} (y) - c} \vspace*{4pt} \\ & \n = \n &
      \norm{\phi (a_i, b_i) - c} + \norm{\phi (b_i, a_i) - c} - \norm{a_i - c} - \norm{b_i - c} \vspace*{4pt} \\ & \n \leq \n &
  - 2 \norm{\phi(a_i, b_i) - a_i} + \norm{a_i + b_i - 2c} = - 2 \norm{\xi_{s_i} (x) - \xi_{s_i-} (x)} + 2 \norm{c_i - c} \vspace*{4pt} \\ & \n \leq \n & 
  - 2 \ep + 2 \norm{c_i - c} \leq - \ep \end{array}
\end{equation}
 for all~$c \in B (c_i, \ep / 2)$.
 Now, observe that there exists~$\ep' > 0$ such that
\begin{equation}
\label{eq:jump-2}
  B (c, \ep / 2) \cap \Delta (\ep') \neq \varnothing \ \ \hbox{for all} \ \ c \in \Delta \quad \hbox{where} \quad \Delta (\ep') = \Delta \cap (\ep' \Z)^d.
\end{equation}
 For the Euclidean norm, \eqref{eq:jump-2} holds for~$\ep' = \ep / 2$.
 This and the equivalence of the norms in finite dimensions imply that, for each norm, there indeed exists~$\ep' > 0$ such
 that~\eqref{eq:jump-2} holds.
 In addition, because the opinion space~$\Delta$ is bounded, and again the dimension is finite,
\begin{equation}
\label{eq:jump-3}
  \card (\Delta (\ep')) < \infty \quad \hbox{for all} \quad \ep' > 0.
\end{equation}
 Combining~\eqref{eq:jump-2} and~\eqref{eq:jump-3}, we deduce that
 $$ \Delta' (\ep') = \{c \in \Delta (\ep') : \card \{i : c \in B (c_i, \ep / 2) \} = \infty \} \neq \varnothing. $$
 In particular, there exists
 $$ c' \in \Delta' (\ep') \quad \hbox{such that} \quad I = \{i \in \N : c \in B (c_i, \ep / 2) \} \ \hbox{is infinite}. $$
 This, together with~\eqref{eq:jump-1} and~Lemma~\ref{lem:monotone}, implies that
 $$ \lim_{t \to \infty} \,X_t^{c'} \leq X_0^{c'} + \sum_{i \in I} \,(X_{s_i}^{c'} - X_{s_i-}^{c'}) = X_0^{c'} + \sum_{i \in I} \,(- \ep) = - \infty, $$
 which contradicts the fact that~$(X_t^{c'})$ is positive.
\end{proof} \\ \\
 The next lemma shows that the jumps getting smaller and smaller implies that, for large times, neighbors must either be incompatible or
 have almost the same opinion.
\begin{lemma} --
\label{lem:distance}
 For all~$0 < \ep < \tau$, there exists~$T = T (\ep)$ almost surely finite such that
 $$ \norm{\xi_s (x) - \xi_s (y)} \notin [\ep, \tau] \quad \hbox{for all} \quad s \geq T \ \hbox{and} \ (x, y) \in \E. $$
\end{lemma}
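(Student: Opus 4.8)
The plan is to argue by contradiction: Lemma~\ref{lem:jump} reduces the claim to a statement about the ring times of a single edge, and the rate-one Poisson clocks then do the rest. First I would fix $0 < \ep < \tau$ and apply Lemma~\ref{lem:jump} with $\mu\ep$ in place of $\ep$ to get an almost surely finite time $S = S(\mu\ep)$ with $\norm{\xi_s(x) - \xi_{s-}(x)} < \mu\ep$ for all $s \geq S$ and $x \in \V$. The key deterministic observation is the following: fix an edge $(x,y) \in \E$ and write $d_s = \norm{\xi_s(x) - \xi_s(y)}$; if $(x,y)$ rings at a time $r \geq S$, then $d_{r-} \notin [\ep,\tau]$. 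Indeed, if $d_{r-} \leq \tau$ the endpoints interact, so the opinion at $x$ jumps by exactly $\mu\,\norm{\xi_{r-}(x) - \xi_{r-}(y)} = \mu d_{r-}$, which must be $< \mu\ep$ by the choice of $S$, forcing $d_{r-} < \ep$. I also record that $s \mapsto d_s$ is piecewise constant with jumps only at the ring times of the finitely many edges incident to $x$ or $y$; let $m \geq 1$ be their number.

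Suppose now that, with positive probability, no finite $T$ as in the statement exists. Since $\E$ is finite, there is then a fixed edge $(x,y)$ for which $\{s \geq 0 : d_s \in [\ep,\tau]\}$ is unbounded, and I would split into two cases. If $d_s \in [\ep,\tau]$ for all $s$ in some interval $[T_0,\infty)$, then $(x,y)$, ringing at rate one, rings at some time $r \geq \max(T_0, S)$, and $d_{r-} \in [\ep,\tau]$ because $[\ep,\tau]$ is closed, contradicting the observation above; so this case has probability zero. Otherwise $s \mapsto d_s$ visits $[\ep,\tau]$ on infinitely many disjoint intervals, whose left endpoints $u_1 < u_2 < \cdots$ satisfy $u_k \to \infty$ (there being only finitely many relevant ring times in any bounded interval) and $d_{u_k} \in [\ep,\tau]$ for every $k$.

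Ruling out this second case is the step I expect to be the main obstacle, the point being that $d$ can be pushed into $[\ep,\tau]$ by interactions on edges adjacent to $(x,y)$, so Lemma~\ref{lem:jump} applied to $(x,y)$ alone says nothing. My plan is to let $A_k$ be the event that the first ring after $u_k$ of an edge incident to $x$ or $y$ is a ring of $(x,y)$ itself; by the strong Markov property and the memorylessness of the Poisson clocks, $P(A_k \mid \F_{u_k}) = 1/m$, where $\F_t$ is the natural filtration of the process. On $A_k$, the opinions at $x$ and $y$ are unchanged between $u_k$ and that ring $r_k > u_k$, so $d_{r_k-} = d_{u_k} \in [\ep,\tau]$, which contradicts the deterministic observation as soon as $r_k \geq S$; hence $A_k \subseteq \{u_k < S\}$. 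On the other hand $P(A_k) = \tfrac1m\,P(u_k < \infty)$, while almost surely only finitely many $u_k$ fall below $S$, so $P(u_k < S) \to 0$; letting $k \to \infty$ then forces $P(u_k < \infty) \to 0$, i.e. this case also has probability zero. (Equivalently, one may invoke the conditional Borel--Cantelli lemma, since $\sum_k P(A_k \mid \F_{u_k}) = \infty$ on this case, so $A_k$ would occur infinitely often and hence for some $u_k \geq S$.) Combining the two cases, almost surely the set of times at which the endpoints of any given edge are at distance in $[\ep,\tau]$ is bounded, and taking $T(\ep)$ to be one plus the maximum of these finitely many bounds completes the argument.
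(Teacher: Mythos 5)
Your proof is correct and follows essentially the same route as the paper's: argue by contradiction, note that an interaction at opinion distance in $[\ep,\tau]$ produces a jump of size at least $\mu\ep$, and use the rate-one clocks to force such an interaction after the time $S$ supplied by Lemma~\ref{lem:jump}. The only difference is bookkeeping: where you run a two-case analysis with a strong Markov/conditional Borel--Cantelli argument, the paper observes that the occupation time of $[\ep,\tau]$ by the piecewise-constant opinion distance would be infinite, so the edge would ring infinitely often during it --- your version spells out more carefully a probabilistic step the paper leaves terse.
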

\begin{proof}
 Assume by contradiction that there exist~$\ep > 0$ and~$(x, y) \in \E$ such that the opinion distance along the edge~$\in [\ep, \tau]$ infinitely often.
 Because neighbors interact at rate one, and therefore their opinion distance stays constant for a positive amount of time,
 $$ \int_0^{\infty} \ind \{\norm{\xi_s (x) - \xi_s (y)} \in [\ep, \tau] \} \,ds = \infty. $$
 This implies the existence of a sequence~$(s_i)$ such that~$s_i \to \infty$,
 $$ \norm{\xi_{s_i-} (x) - \xi_{s_i-} (y)} > \ep \quad \hbox{and} \quad \xi_{s_i} (x) = \phi (\xi_{s_i-} (x), \xi_{s_i-} (y)) $$
 which, in turn, implies that
 $$ \norm{\xi_{s_i} (x) - \xi_{s_i-} (x)} = \norm{\mu (\xi_{s_i-} (x) - \xi_{s_i-} (y))} > \mu \ep \quad \hbox{for all} \quad i > 0. $$
 This contradicts Lemma~\ref{lem:jump} and completes the proof.
\end{proof} \\ \\
 To deduce almost sure convergence of the particle system from the previous lemma, the last step is to prove that neighbors who almost totally
 agree cannot randomly oscillate together, which follows from an argument of convexity.
 The proof of the next lemma shows in fact a little bit more:
 there is a partition of the graph into connected components such that all the opinions in the same component are eventually trapped in a
 fixed ball with arbitrarily small radius while opinions in two adjacent components are incompatible, which implies in particular~(P1) and~(P2).
\begin{lemma} --
\label{lem:limit}
 Properties~(P1) and~(P2) hold.
\end{lemma}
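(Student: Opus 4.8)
The plan is to prove that, for every $\ep > 0$, after an almost surely finite time the vertex set splits into a fixed family of connected components that never again interact across one another, all opinions inside a common component staying within distance $(\card (\V) - 1)\ep$ of each other; letting $\ep \to 0$ then yields both the convergence in~(P1) and the dichotomy in~(P2). First I would fix $\ep \in (0, \tau)$ and $\ep_0 \in (0, \tau - \ep)$, and let $T^* = T^*(\ep)$ be the larger of the almost surely finite time from Lemma~\ref{lem:jump} after which every jump at every vertex is smaller than $\ep_0$ and the time from Lemma~\ref{lem:distance} after which no edge carries an opinion distance in $[\ep, \tau]$. For $s \geq T^*$ no edge can change its compatibility status: if an edge is incompatible, firing it does nothing and firing any other edge changes its opinion distance by less than $\ep_0$, so its distance stays above $\tau - \ep_0 > \ep$ and hence, by Lemma~\ref{lem:distance}, above $\tau$; if it is compatible, firing it only brings its two opinions closer while firing any other edge changes its distance by less than $\ep_0$, so its distance stays below $\ep + \ep_0 < \tau$ and hence below $\ep$. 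Consequently, for $s \geq T^*$, the partition $\mathcal{P}(\ep)$ of $\V$ into connected components of compatible edges is constant in time; any edge between two distinct components is incompatible, and any two vertices of a common component are joined by a path of at most $\card (\V) - 1$ compatible edges, so the distance between their opinions is less than $(\card (\V) - 1)\ep$.

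To obtain~(P1), I would fix a component $W$ of $\mathcal{P}(\ep)$ and set $H_s = \operatorname{conv} \{\xi_s (z) : z \in W \}$ for $s \geq T^*$. For $s \geq T^*$ an opinion at a vertex of $W$ can change only when a (necessarily compatible) edge inside $W$ fires, and it is then replaced by a convex combination of two opinions of $H_{s-}$; hence $(H_s)_{s \geq T^*}$ is a nonincreasing family of nonempty compact convex sets of diameter less than $(\card (\V) - 1)\ep$. In particular every $\xi_s (z)$ with $z \in W$ and $s \geq T^*$ lies in the fixed set $H_{T^*}$, so $\norm{\xi_s (z) - \xi_{s'} (z)} < (\card (\V) - 1)\ep$ for all $s, s' \geq T^*(\ep)$. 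Since $T^*(\ep)$ is almost surely finite for every $\ep > 0$, the trajectory $s \mapsto \xi_s (z)$ is almost surely Cauchy and converges to a limit $\xi_\infty (z)$, which is~(P1).

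For~(P2), fix an edge $(x, y) \in \E$ and an arbitrary $\ep \in (0, \tau)$. After time $T^*(\ep)$, either $x$ and $y$ lie in a common component of $\mathcal{P}(\ep)$, so that $\norm{\xi_s (x) - \xi_s (y)} < (\card (\V) - 1)\ep$ for all large $s$ and hence $\norm{\xi_\infty (x) - \xi_\infty (y)} \leq (\card (\V) - 1)\ep$, or they lie in distinct components, so that $(x, y)$ is incompatible and $\norm{\xi_s (x) - \xi_s (y)} > \tau$ for all large $s$ and hence $\norm{\xi_\infty (x) - \xi_\infty (y)} \geq \tau$. The stabilized sets of compatible edges are monotone in $\ep$, so the partitions $\mathcal{P}(\ep)$ refine as $\ep \downarrow 0$ and, $\V$ being finite, become constant for $\ep$ small enough; hence the same alternative holds for all small $\ep$, and letting $\ep \to 0$ gives $\norm{\xi_\infty (x) - \xi_\infty (y)} = 0$ or $\norm{\xi_\infty (x) - \xi_\infty (y)} \geq \tau$, which is~(P2).

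I expect the main obstacle to be the freezing of the partition in the first step, where the two quantitative inputs — jumps eventually below $\ep_0$ (Lemma~\ref{lem:jump}) and opinion distances eventually avoiding the band $[\ep, \tau]$ (Lemma~\ref{lem:distance}) — must be combined, and one has to rule out that an arbitrarily long succession of small jumps carries an edge across that band; once the partition is frozen, the contraction argument for~(P1) and the limiting argument for~(P2) are both short. A small additional argument is then needed to exclude the boundary value $\norm{\xi_\infty (x) - \xi_\infty (y)} = \tau$ for two adjacent components, i.e.\ to get~(P2) in the sharp form stated, and this is where the non-atomicity of the common initial law enters.
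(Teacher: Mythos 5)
Your proposal is correct and follows essentially the same route as the paper: combine Lemmas~\ref{lem:jump} and~\ref{lem:distance} to fix, after an almost surely finite time, a partition of~$\V$ into components whose internal opinion distances are $O(\ep)$ and whose boundary edges are incompatible, trap each component by a convexity argument, and let $\ep \to 0$ (the paper uses balls $B(c_i, \ep)$ where you use convex hulls, and builds the freezing into the definition of the equivalence relation by requiring closeness at \emph{all} times $s \geq T$, where you derive it directly from the jump lemma --- cosmetic differences). Regarding the boundary value you flag at the end: the paper's own argument likewise only yields $\norm{\xi_{\infty}(x) - \xi_{\infty}(y)} \geq \tau$ across adjacent components, and since nothing is assumed about the initial law the value $\tau$ cannot in general be excluded (an incompatible edge's distance can decrease to $\tau$ from above without the edge ever firing); fortunately only the weak form $\norm{\xi_{\infty}(x) - \xi_{\infty}(y)} \notin (0, \tau)$ is used downstream, in Lemma~\ref{lem:consensus}, where the limits lie in an open ball of radius $\tau/2$ and hence at distance strictly less than $\tau$, so your proof as written suffices for the purposes of the paper.
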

\begin{proof}
 Let~$N = \card (\V)$ and~$0 < \ep < \tau / N$.
 By Lemma~\ref{lem:distance}, there exists~$T < \infty$ such that
 $$ \norm{\xi_s (x) - \xi_s (y)} \notin [\ep / N, \tau] \quad \hbox{for all} \quad s \geq T \ \hbox{and} \ (x, y) \in \E $$
 and we write~$x \leftrightarrow y$ if there exist~$x_0 = x, x_1, \ldots, x_j = y$ all distinct such that
 $$ (x_i, x_{i + 1}) \in \E \ \hbox{and} \ \norm{\xi_s (x_i) - \xi_s (x_{i + 1})} < \ep / N \quad \hbox{for all} \quad 0 \leq i < j \ \hbox{and} \ s \geq T. $$
 In particular, by the triangle inequality,
\begin{equation}
\label{eq:limit-1}
  \norm{\xi_T (x) - \xi_T (y)} \leq \sum_{i = 0}^{j - 1} \,\norm{\xi_T (x_i) - \xi_T (x_{i + 1})} < \frac{j \ep}{N} \leq \ep.
\end{equation}
 The relationship~$\leftrightarrow$ defines an equivalence relationship so it induces a partition of the vertex set
 into equivalence classes~$\V_1, \V_2, \ldots, \V_k$ that correspond to connected components of the graph.
 In addition, by~\eqref{eq:limit-1} and the definition of~$\leftrightarrow$, there exist~$c_1, c_2, \ldots, c_k \in \Delta$ such that
\begin{enumerate}
\item[(a)] for all~$i = 1, 2, \ldots, k$, we have~$\xi_T (x) \in B (c_i, \ep)$ for all~$x \in \V_i$ and \vspace*{4pt}
\item[(b)] whenever~$\V_i$ and~$\V_j$ are connected by~$(x, y) \in \E$, we have~$\norm{\xi_T (x) - \xi_T (y)} > \tau$.
\end{enumerate}
 Assume that properties~(a) and~(b) hold from time~$T$ to time~$s- > T$ and that two neighbors~$x$ and~$y$ interact at time~$s$.
 Then, either~$x \leftrightarrow y$, say~$x, y \in \V_i$, in which case
 $$ \begin{array}{rcl}
     [\xi_s (x), \xi_s (y)] & \n = \n & [(1 - \mu) \,\xi_{s-} (x) + \mu \,\xi_{s-} (y), (1 - \mu) \,\xi_{s-} (y) + \mu \,\xi_{s-} (x)] \vspace*{4pt} \\
                      & \n \subset \n & [\xi_{s-} (x), \xi_{s-} (y)] \subset B (c_i, \ep) \end{array} $$
 by convexity of~$B (c_i, \ep)$, or edge~$(x, y)$ connects two different classes in which case
 $$ \norm{\xi_{s-} (x) - \xi_{s-} (y)} > \tau \quad \hbox{therefore} \quad \xi_s (x) = \xi_{s-} (x) \ \hbox{and} \ \xi_s (y) = \xi_{s-} (y). $$
 In either case, properties~(a) and~(b) remain true after the interaction.
 Because~$\ep > 0$ can be chosen arbitrarily small, this proves that properties~(P1) and~(P2) hold.
\end{proof}

\section{Stopping time and consensus event}
\label{sec:consensus}
 This section is devoted to the proof of Theorem~\ref{th:consensus}.
 As mentioned after the proof of Lemma~\ref{lem:monotone}, the processes~$(X_t^c)$ are bounded supermartingales so the idea is to apply the
 optional stopping theorem.
 Before proving the theorem, we define a suitable stopping time and show how the consensus event relates to the configuration of the system
 at this stopping time. Let
 $$ T_* = \inf \{t : \norm{\xi_t (x) - \xi_t (y)} \notin [\tau / 2, \tau] \ \hbox{for all} \ x, y \in \V \}. $$
 Note that time~$T_*$ is a stopping time for the natural filtration of the process.
 Time~$T_*$ is also almost surely finite according to Lemma~\ref{lem:distance}, so we have the following lemma.
\begin{lemma} --
\label{lem:stopping}
 Time~$T_*$ is an almost surely finite stopping time.
\end{lemma}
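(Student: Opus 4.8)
The lemma makes two claims: that $T_*$ is a stopping time for the natural filtration $(\F_t)$ of the process, and that $T_*$ is almost surely finite. (I read the condition in the definition of $T_*$ as quantified over the edges $(x,y)\in\E$, matching Lemma~\ref{lem:distance}.) Finiteness is the quick part: the plan is to invoke Lemma~\ref{lem:distance} with $\ep=\tau/2$, which is legitimate since $0<\tau/2<\tau$, obtaining an almost surely finite time $T(\tau/2)$ such that $\norm{\xi_s(x)-\xi_s(y)}\notin[\tau/2,\tau]$ for all $s\geq T(\tau/2)$ and all $(x,y)\in\E$. Evaluating at $s=T(\tau/2)$, the configuration $\xi_{T(\tau/2)}$ already satisfies the condition defining $T_*$, hence $T_*\leq T(\tau/2)<\infty$ almost surely.

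For the stopping-time property, the plan is to realize $T_*$ as the first hitting time (debut) of an open set of configurations. Put $U=\{\eta:\V\to\Delta:\norm{\eta(x)-\eta(y)}\notin[\tau/2,\tau]\text{ for all }(x,y)\in\E\}$, so that $T_*=\inf\{t\geq 0:\xi_t\in U\}$. First I would observe that $U$ is open: its complement is the finite union over $(x,y)\in\E$ of the sets $\{\eta:\norm{\eta(x)-\eta(y)}\in[\tau/2,\tau]\}$, each the preimage of a closed interval under the continuous map $\eta\mapsto\norm{\eta(x)-\eta(y)}$ and hence closed. Then, using that $t\mapsto\xi_t$ is right-continuous and $(\F_t)$-adapted --- in fact piecewise constant, since the configuration changes only at the jump times of the independent rate-one Poisson clocks on the edges --- I would establish, for every $t>0$,
$$ \{T_*<t\}=\bigcup_{q\in\mathbb{Q}\cap[0,t)}\{\xi_q\in U\}\in\F_t; $$
the only nontrivial inclusion uses that if $\xi_{t_0}\in U$ for some $t_0<t$ then, by right-continuity and openness of $U$, one has $\xi_s\in U$ for all $s$ in a nondegenerate interval $[t_0,t_0+\delta)$, which contains a rational $q<t$. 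This makes $T_*$ a stopping time for the right-continuous natural filtration; the piecewise-constant structure of the paths in fact makes it one for the plain natural filtration as well, since $\{T_*\leq t\}$ is then determined by the finitely many clock rings in $[0,t]$ together with the configurations they produce.

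I do not anticipate a genuine obstacle here: the finiteness is immediate from Lemma~\ref{lem:distance} applied with $\ep=\tau/2$, and the only real content is the measurability bookkeeping in the previous paragraph --- identifying $U$ as an open set and exploiting the right-continuity (indeed piecewise constancy) of the sample paths.
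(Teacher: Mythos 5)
Your proof is correct and matches the paper's, which offers no written argument beyond asserting the stopping-time property and citing Lemma~\ref{lem:distance} for almost sure finiteness; your application of that lemma with $\ep=\tau/2$ and your debut-of-an-open-set measurability argument simply supply the details the paper leaves implicit. Note only that your reading of the condition defining $T_*$ as quantified over edges is exactly what makes the citation of Lemma~\ref{lem:distance} legitimate, whereas the paper's displayed definition literally quantifies over all pairs $x,y\in\V$.
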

 We now identify a collection of configurations at the stopping time~$T_*$ that always lead the population to consensus eventually.
 More precisely, letting
 $$ \A = \bigcup_{x \in \V} \Bigg\{\sup_{c \in \Delta} \norm{\xi_{T_*} (x) - c} < \tau \Bigg\}, $$
 we have the following inclusion.
\begin{lemma} --
\label{lem:consensus}
 We have the inclusion~$\A \subset \C$.
\end{lemma}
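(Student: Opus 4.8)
The plan is to show that, on the event $\A$, the configuration at the stopping time $T_*$ is already so tightly clustered that no pair of opinions can ever reach the incompatibility threshold afterwards, and then to combine this with the description of the limiting configuration in Lemma~\ref{lem:limit}.

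First I would record a basic fact about $T_*$: since the trajectories $t \mapsto \xi_t (x)$ are right-continuous and piecewise constant, the condition $\norm{\xi_t (x) - \xi_t (y)} \notin [\tau / 2, \tau]$ for all $x, y \in \V$ is constant on each inter-jump interval, so $T_*$ is itself one of these jump times (or $0$) and the condition holds \emph{at} $T_*$; by Lemma~\ref{lem:stopping} this happens on an event of probability one. Now fix $\omega$ in this full-measure part of $\A$ and pick $x_0 \in \V$ with $\sup_{c \in \Delta} \norm{\xi_{T_*} (x_0) - c} < \tau$. Since every opinion lies in $\Delta$, this gives $\norm{\xi_{T_*} (x_0) - \xi_{T_*} (y)} < \tau$ for all $y \in \V$, and because this distance avoids the band $[\tau/2,\tau]$ we get $\norm{\xi_{T_*} (x_0) - \xi_{T_*} (y)} < \tau / 2$ for all $y$. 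A single application of the triangle inequality then yields $\norm{\xi_{T_*} (x) - \xi_{T_*} (y)} < \tau$ for arbitrary $x, y$, and invoking the band condition once more upgrades this to $\norm{\xi_{T_*} (x) - \xi_{T_*} (y)} < \tau / 2$. Hence $\sup_{x, y \in \V} \norm{\xi_{T_*} (x) - \xi_{T_*} (y)} < \tau / 2$ on $\A$.

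The second ingredient is that $s \mapsto \sup_{x, y \in \V} \norm{\xi_s (x) - \xi_s (y)}$ is non-increasing: at each interaction the two interacting opinions $a, b$ are replaced by $\phi (a, b)$ and $\phi (b, a)$, which lie on the segment $[a, b]$, so the convex hull of the (finite) set of opinions can only shrink, and the diameter of the convex hull of a finite set equals the largest pairwise distance among the points. Consequently, on $\A$ we have $\sup_{x, y} \norm{\xi_t (x) - \xi_t (y)} < \tau / 2$ for all $t \geq T_*$. Letting $t \to \infty$, using property~(P1) of Lemma~\ref{lem:limit} and the continuity of $(a_x)_{x \in \V} \mapsto \sup_{x, y} \norm{a_x - a_y}$ on the finite product, the limiting configuration satisfies $\sup_{x, y} \norm{\xi_\infty (x) - \xi_\infty (y)} \leq \tau / 2 < \tau$. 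In particular $\norm{\xi_\infty (x) - \xi_\infty (y)} < \tau$ along every edge, which together with property~(P2) forces $\xi_\infty (x) = \xi_\infty (y)$ for all $(x, y) \in \E$; since $\G$ is connected, $\xi_\infty$ is constant on $\V$, so $\lim_{t \to \infty} \sup_{x, y} \norm{\xi_t (x) - \xi_t (y)} = 0$, i.e., $\omega \in \C$. This proves $\A \subset \C$.

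I expect the only genuinely delicate point to be the first one, namely arguing carefully that the forbidden band $[\tau / 2, \tau]$ is avoided precisely \emph{at} time $T_*$ (not merely just before or just after), which rests on the piecewise-constant, right-continuous structure of the paths. Everything after that is elementary; the single conceptual step is the purely metric observation that one vertex lying within $\tau$ of all of $\Delta$ collapses the entire opinion profile into one $(\tau/2)$-cluster at time $T_*$, after which the monotonicity of the dynamics and Lemma~\ref{lem:limit} finish the argument.
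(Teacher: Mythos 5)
Your proof is correct and follows essentially the same route as the paper's: on $\A$ the avoidance of the band $[\tau/2,\tau]$ at time $T_*$ collapses the whole configuration into a single $\tau/2$-cluster, this clustering persists by convexity, and properties (P1) and (P2) of Lemma~\ref{lem:limit} then force the limiting opinions to coincide. The only (minor) differences are that you establish persistence via monotonicity of the configuration diameter (convex-hull shrinkage) rather than by invariance of the ball $B(\xi_{T_*}(x),\tau/2)$ as in the paper --- two equivalent convexity facts --- and that you make explicit the right-continuity argument showing the band condition actually holds \emph{at} $T_*$, which the paper uses implicitly in its first displayed implication.
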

\begin{proof}
 The definition of~$T_*$ implies that
\begin{equation}
\label{eq:consensus-1}
 \xi_{T_*} (y) \in B (\xi_{T_*} (x), \tau) \ \Rightarrow \ \xi_{T_*} (y) \in B (\xi_{T_*} (x), \tau / 2).
\end{equation}
 In addition, by the proof of Lemma~\ref{lem:limit} (convexity argument),
\begin{equation}
\label{eq:consensus-2}
  \xi_{T_*} (y) \in B (c, \tau / 2) \ \hbox{for all} \ y \in \V \ \Rightarrow \ \xi_{T_*} (y) \in B (c, \tau / 2) \ \hbox{for all} \ y \in \V \ \hbox{and} \ s > T_*.
\end{equation}
 This, together with Lemma~\ref{lem:limit} itself, gives the implications
 $$ \begin{array}{l}
    \sup_{c \in \Delta} \norm{\xi_{T_*} (x) - c} < \tau \ \hbox{for some} \ x \in \V \vspace*{4pt} \\
    \hspace*{18pt} \Rightarrow \ (\xi_{T_*} (y) \in B (\xi_{T_*} (x), \tau) \ \hbox{for all} \ y \in \V) \ \hbox{for some} \ x \in \V \vspace*{4pt} \\
    \hspace*{18pt} \Rightarrow \ (\xi_{T_*} (y) \in B (\xi_{T_*} (x), \tau / 2) \ \hbox{for all} \ y \in \V) \ \hbox{for some} \ x \in \V \quad \hbox{(by \eqref{eq:consensus-1})} \vspace*{4pt} \\
    \hspace*{18pt} \Rightarrow \ (\xi_{T_*} (y) \in B (c, \tau / 2) \ \hbox{for all} \ y \in \V) \ \hbox{for some} \ c \in \Delta \vspace*{4pt} \\
    \hspace*{18pt} \Rightarrow \ (\xi_s (y) \in B (c, \tau / 2) \ \hbox{for all} \ y \in \V \ \hbox{and} \ s > T_*) \ \hbox{for some} \ c \in \Delta \quad \hbox{(by \eqref{eq:consensus-2})} \vspace*{4pt} \\
    \hspace*{18pt} \Rightarrow \ (\lim_{s \to \infty} \xi_s (y) \in B (c, \tau / 2) \ \hbox{for all} \ y \in \V) \ \hbox{for some} \ c \in \Delta \vspace*{4pt} \\
    \hspace*{18pt} \Rightarrow \ \lim_{s \to \infty} \norm{\xi_s (y) - \xi_s (z)} = 0 \ \hbox{for all} \ y, z \in \V \quad \hbox{(by (P2) and choice of $\tau / 2$)}. \end{array} $$
 This completes the proof.
\end{proof} \\ \\
 Using Lemmas~\ref{lem:monotone}, \ref{lem:stopping} and~\ref{lem:consensus}, we can now deduce the theorem. \\ \\
\begin{proofof}{Theorem~\ref{th:consensus}}
 According to Lemma~\ref{lem:monotone}, for all~$c \in \Delta$, the processes~$(X_t^c)$ is bounded and almost surely nonincreasing.
 In particular, the process is a bounded supermartingale with respect to the natural filtration of the opinion model.
 According to Lemma~\ref{lem:stopping}, we also have that the random time~$T_*$ is an almost surely finite stopping time with respect to the same filtration.
 In particular, it follows from the optional stopping theorem that, for all~$c \in \Delta$,
\begin{equation}
\label{eq:theorem-1}
  E (X_{T_*}^c) \leq E (X_0^c) = E \bigg(\sum_{x \in \V} \,\norm{\xi_0 (x) - c} \bigg) = \card (\V) \cdot E \,\norm{X - c}.
\end{equation}
 Now, on the complement of~$\A$,
 $$ \hbox{for all~$x \in \V$}, \quad \hbox{there exists~$c_x \in \Delta$} \quad \hbox{such that} \quad \norm{\xi_{T_*} (x) - c_x} \geq \tau. $$
 This and the triangle inequality imply that
 $$ \norm{\xi_{T_*} (x) - \c} \geq \norm{\xi_{T_*} (x) - c_x} - \norm{c_x - \c} \geq \tau - \d / 2 \quad \hbox{for all} \quad x \in \Delta. $$
 This gives the following bound for the conditional expectation:
\begin{equation}
\label{eq:theorem-2}
  E (X_{T_*}^{\c} \,| \,\A^c) = E \bigg(\sum_{x \in \V} \,\norm{\xi_{T_*} (x) - \c} \,\Big| \,\A^c \bigg) \geq \card (\V) \cdot (\tau - \d / 2).
\end{equation}
 Combining~\eqref{eq:theorem-1} with~$c = \c$ and~\eqref{eq:theorem-2}, we deduce that
 $$ \bigg(\tau - \frac{\d}{2} \bigg)(1 - P (\A)) \leq \frac{E (X_{T_*}^{\c} \,| \,\A^c) \,P (\A^c)}{\card (\V)} \leq \frac{E (X_{T_*}^{\c})}{\card (\V)} \leq E \,\norm{X - \c} $$
 which, together with Lemma~\ref{lem:consensus}, implies that
 $$ P (\C) \geq P (\A) \geq 1 - \frac{E \,\norm{X - \c}}{\tau - \d / 2} \quad \hbox{for all} \quad \tau > \d / 2. $$
 This completes the proof of the theorem.
\end{proofof}


\section{Numerical examples}
\label{sec:examples}
 In this section, we use Theorem~\ref{th:consensus} to prove Examples~\ref{ex:uniform} and~\ref{ex:triangle}. \\ \\
\begin{proofof}{Example~\ref{ex:uniform}}
 To deal with the uniform case, we first observe that
\begin{equation}
\label{eq:volume}
  \lambda (B (c, s)) = \lambda (B (0, s)) = s^d \,\lambda (B (0, 1)).
\end{equation}
 In particular, letting~$X = \uniform (\Delta)$, we get
 $$ P (\norm{X - c} < s) = P (X \in B (c, s)) = \frac{\lambda (B (c, s))}{\lambda (B (c, r))} = \bigg(\frac{s}{r} \bigg)^d $$
 for all~$s \leq r$, from which it follows that
 $$ E \,\norm{X - c} = \int_0^{\infty} P (\norm{X - c} > s) \,ds = \int_0^r \bigg(1 - \bigg(\frac{s}{r} \bigg)^d \bigg) \,ds = \frac{dr}{d + 1}. $$
 Observing also that~$\d = 2r$ and~$\c = c$, and applying Theorem~\ref{th:consensus}, we get
 $$ P (\C) \geq 1 - \frac{E \,\norm{X - \c}}{\tau - \d / 2} = 1 - \frac{E \,\norm{X - c}}{\tau - r} = 1 - \frac{dr}{(d + 1)(\tau - r)}. $$
 This completes the proof.
\end{proofof} \\ \\
\begin{proofof}{Example~\ref{ex:triangle}}
 To begin with, we observe that
 $$ \int_{\Delta} (r - \norm{a - c}) \,d\lambda (a) = \frac{r \,\lambda (B (c, r))}{d + 1}. $$
 Using also~\eqref{eq:volume} and thinking of the probability that~$X \in B (c, s)$ as the volume of a cone plus the volume of a cylinder in
 dimension~$d + 1$, we deduce that
 $$ \begin{array}{rcl}
      P (\norm{X - c} < s) & \n = \n & P (X \in B (c, s)) =
    \displaystyle \frac{s \,\lambda (B (c, s)) / (d + 1) + (r - s) \,\lambda (B (c, s))}{r \,\lambda (B (c, r)) / (d + 1)} \vspace*{8pt} \\ & \n = \n &
    \displaystyle \bigg(\frac{s}{r} \bigg)^d \bigg(\frac{s + (d + 1)(r - s)}{r} \bigg) =
    \displaystyle \bigg(\frac{s}{r} \bigg)^d \bigg(1 + d \bigg(1 - \frac{s}{r} \bigg) \bigg) \end{array} $$
 for all~$s \leq r$, from which it follows that
 $$ \begin{array}{rcl}
      E \,\norm{X - c} & \n = \n & \displaystyle \int_0^{\infty} P (\norm{X - c} > s) \,ds \vspace*{4pt} \\
                       & \n = \n & \displaystyle \int_0^r \bigg(1 - \bigg(\frac{s}{r} \bigg)^d \bigg(1 + d \bigg(1 - \frac{s}{r} \bigg) \bigg) \bigg) \,ds =
                                   \frac{dr}{d + 2}. \end{array} $$
 As previously,~$\d = 2r$ and~$\c = c$ so, according to Theorem~\ref{th:consensus}, we have
 $$ P (\C) \geq 1 - \frac{E \,\norm{X - \c}}{\tau - \d / 2} = 1 - \frac{E \,\norm{X - c}}{\tau - r} = 1 - \frac{dr}{(d + 2)(\tau - r)}. $$
 This completes the proof.
\end{proofof}


\bibliographystyle{plain}
\bibliography{biblio.bib}

\begin{thebibliography}{10}

\bibitem{axelrod_1997}
R.~Axelrod.
\newblock The dissemination of culture: A model with local convergence and
  global polarization.
\newblock {\em Journal of conflict resolution}, 41(2):203--226, 1997.

\bibitem{bramson_griffeath_1979}
M.~Bramson and D.~Griffeath.
\newblock Renormalizing the {$3$}-dimensional voter model.
\newblock {\em Ann. Probab.}, 7(3):418--432, 1979.

\bibitem{bramson_griffeath_1980}
M.~Bramson and D.~Griffeath.
\newblock Clustering and dispersion rates for some interacting particle systems
  on {${\bf Z}$}.
\newblock {\em Ann. Probab.}, 8(2):183--213, 1980.

\bibitem{castellano_fortunato_loreto_2009}
C.~Castellano, S.~Fortunato, and V.~Loreto.
\newblock Statistical physics of social dynamics.
\newblock {\em Reviews of modern physics}, 81(2):591, 2009.

\bibitem{clifford_sudbury_1973}
P.~Clifford and A.~Sudbury.
\newblock A model for spatial conflict.
\newblock {\em Biometrika}, 60:581--588, 1973.

\bibitem{cox_griffeath_1983}
J.~Theodore Cox and David Griffeath.
\newblock Occupation time limit theorems for the voter model.
\newblock {\em Ann. Probab.}, 11(4):876--893, 1983.

\bibitem{cox_griffeath_1986}
J.~Theodore Cox and David Griffeath.
\newblock Diffusive clustering in the two-dimensional voter model.
\newblock {\em Ann. Probab.}, 14(2):347--370, 1986.

\bibitem{deffuant_al_2000}
G.~Deffuant, D.~Neau, F.~Amblard, and G.~Weisbuch.
\newblock Mixing beliefs among interacting agents.
\newblock {\em Advances in Complex Systems}, 3(01n04):87--98, 2000.

\bibitem{haggstrom_2012}
O.~H\"{a}ggstr\"{o}m.
\newblock A pairwise averaging procedure with application to consensus
  formation in the {D}effuant model.
\newblock {\em Acta Appl. Math.}, 119:185--201, 2012.

\bibitem{hegselmann_krause_2002}
R.~Hegselmann and U.~Krause.
\newblock Opinion dynamics and bounded confidence models, analysis, and
  simulation.
\newblock {\em Journal of artificial societies and social simulation}, 5(3),
  2002.

\bibitem{hirscher_2014a}
T.~Hirscher.
\newblock {\em Consensus formation in the Deffuant model}.
\newblock 2014.
\newblock Thesis (Ph.D.)--Chalmers University of Technology.

\bibitem{hirscher_2014b}
T.~Hirscher.
\newblock The {D}effuant model on {$\Bbb Z$} with higher-dimensional opinion
  spaces.
\newblock {\em ALEA Lat. Am. J. Probab. Math. Stat.}, 11(1):409--444, 2014.

\bibitem{holley_liggett_1975}
R.~A. Holley and T.~M. Liggett.
\newblock Ergodic theorems for weakly interacting infinite systems and the
  voter model.
\newblock {\em Ann. Probability}, 3(4):643--663, 1975.

\bibitem{lanchier_2012a}
N.~Lanchier.
\newblock The {A}xelrod model for the dissemination of culture revisited.
\newblock {\em Ann. Appl. Probab.}, 22(2):860--880, 2012.

\bibitem{lanchier_2012b}
N.~Lanchier.
\newblock The critical value of the {D}effuant model equals one half.
\newblock {\em ALEA Lat. Am. J. Probab. Math. Stat.}, 9(2):383--402, 2012.

\bibitem{lanchier_2017}
N.~Lanchier.
\newblock {\em Stochastic modeling}.
\newblock Universitext. Springer, Cham, 2017.

\bibitem{lanchier_moisson_2016}
N.~Lanchier and P.-H. Moisson.
\newblock Fixation results for the two-feature {A}xelrod model with a variable
  number of opinions.
\newblock {\em J. Theoret. Probab.}, 29(4):1554--1580, 2016.

\bibitem{lanchier_scarlatos_2013}
N.~Lanchier and S.~Scarlatos.
\newblock Fixation in the one-dimensional {A}xelrod model.
\newblock {\em Ann. Appl. Probab.}, 23(6):2538--2559, 2013.

\bibitem{lanchier_scarlatos_2014}
N.~Lanchier and S.~Scarlatos.
\newblock Clustering and coexistence in the one-dimensional vectorial
  {D}effuant model.
\newblock {\em ALEA Lat. Am. J. Probab. Math. Stat.}, 11(1):541--564, 2014.

\bibitem{lanchier_scarlatos_2017}
N.~Lanchier and S.~Scarlatos.
\newblock Limiting behavior for a general class of voter models with confidence
  threshold.
\newblock {\em ALEA Lat. Am. J. Probab. Math. Stat.}, 14(1):63--92, 2017.

\bibitem{lanchier_schweinsberg_2012}
N.~Lanchier and J.~Schweinsberg.
\newblock Consensus in the two-state {A}xelrod model.
\newblock {\em Stochastic Process. Appl.}, 122(11):3701--3717, 2012.

\bibitem{li_2014}
J.~Li.
\newblock {\em Axelrod's model in two dimensions}.
\newblock ProQuest LLC, Ann Arbor, MI, 2014.
\newblock Thesis (Ph.D.)--Duke University.

\bibitem{liggett_1999}
T.~M. Liggett.
\newblock {\em Stochastic interacting systems: contact, voter and exclusion
  processes}, volume 324 of {\em Grundlehren der Mathematischen
  Wissenschaften}.
\newblock Springer-Verlag, Berlin, 1999.

\bibitem{scarlatos_2013}
S.~Scarlatos.
\newblock {\em Voter models with confidence parameter}.
\newblock 2013.
\newblock Thesis (Ph.D.)--University of Patras, Greece.

\bibitem{vazquez_krapivsky_redner_2003}
F.~Vazquez, P.~L. Krapivsky, and S.~Redner.
\newblock Constrained opinion dynamics: freezing and slow evolution.
\newblock {\em J. Phys. A}, 36(3):L61--L68, 2003.

\end{thebibliography}

\end{document}